\documentclass[11p,leqno]{amsart}
\textheight 8in
\textwidth 5.5 in
\voffset -0.3in
\hoffset -0.6in
\usepackage{amsmath}
\usepackage{amsfonts}
\usepackage{amssymb}
\usepackage{graphicx}
\usepackage{color}
\parindent 6pt
\parskip 4pt

\newtheorem{theorem}{Theorem}[section]
\newtheorem*{theorem*}{Theorem}
\newtheorem{lemma}{Lemma}[section]
\newtheorem{corollary}[theorem]{Corollary}




\def\R{\Bbb R}


\numberwithin{equation}{section}

\begin{document}
\title[isoperimetric comparison]{\bf Isoperimetric comparisons via viscosity}

\author{Lei Ni}

\address{Department of Mathematics, University of California at San Diego, La Jolla, CA 92093, USA}
\email{lni@math.ucsd.edu}

\author{ Kui Wang}


\address{School of Mathematic Sciences, Fudan University, Shanghai, 200433, China}
\email{kuiwang09@fudan.edu.cn}

\date{}

\maketitle

\begin{abstract} Viscosity solutions are suitable notions in the study of nonlinear PDEs justified by  estimates established via the maximum principle or the comparison principle. Here we prove that the isoperimetric profile functions of  Riemannian manifolds with Ricci lower bound are viscosity super-solutions of some nonlinear differential equations. From these one can derive the isoperimetric inequalities of L\'evy-Gromov and B\'erard-Besson-Gallot, as well as a upper bound of Morgan-Johnson.
\end{abstract}

\section{Introduction}

Viscosity solutions are solutions with usually less regularity.  However this flexibility is   important in the development and the study of nonlinear PDEs. For motivations, examples and techniques  see  e.g. \cite{Evans, Han-Lin}. One particular advantage of the concept is that it allows effective uses of the comparison principle so that crucial estimates can be established for  existence and uniqueness even though the viscosity solutions are a much broader class of solutions.

Let $(M^n, g)$ be a compact Riemannian manifold. The  isoperimetric profile function is defined as follows. For any $\beta\in (0, 1)$, consider smooth regions $\Omega\subset M$ such that
the volume $|\Omega|$ satisfying $|\Omega|=\beta |M|$, and let
$h_1(\beta, g)=\inf_{\Omega} \frac{|\partial \Omega|}{|M|}$. Here $|\partial \Omega|$ denotes the $n-1$-dimensional area of $\partial \Omega$, and the infimum is taken over all $\Omega$ satisfying the volume constraint. The profile function  generally is  not smooth, but continuous.

In this short note we shall prove that isoperimetric profile functions are  viscosity super-solutions of some nonlinear differential equations. From these one can derive the isoperimetric inequalities of L\'evy-Gromov \cite{Gromov} and B\'erard-Besson-Gallot \cite{BBG}, as well as some new comparison results.

The consideration here  is motivated by a  paper of Andrews-Bryan \cite{AB}, where the authors established some comparisons for the isoperimetric profile functions for metrics on a two-sphere deformed by the Ricci flow motivated by the earlier work of Hamilton \cite{H}. The comparison here is for static metrics satisfying some conditions on the Ricci curvature. There are two differential equations involved (see Theorem \ref{thm-main1} in Section 2 and Theorem \ref{thm-main3} in Section 4). One is of second order, which can be viewed as a stability result for the isoperimetric profile function. This equation has more or less been shown previously in the works of \cite{Bayle} \cite{MorganJohnson} (see \cite{Morgan} and \cite{Bayle-Ros} as well).  The other is of first order, which can be viewed as a Hamilton-Jacobi type equation. The consideration also leads to an alternate  proof of a  comparison theorem in Section 3, which was originally proved in \cite{MorganJohnson}.  We hope to investigate further in the future the application of this approach to the study of the isoperimetric profile functions. The interested reader should consult the survey article \cite{Andrews} and the references therein for related and more recent developments.

\section{Isoperimetric profile function as a viscosity supersolution}

Let $(M^n, g)$ be a compact Riemannian manifold. The  isoperimetric profile function is defined as follows. For any $\beta\in (0, 1)$, consider smooth region $\Omega\subset M$ such that
its volume $|\Omega|$ satisfying $|\Omega|=\beta |M|$, let
$h_1(\beta, g)=\inf_{\Omega} \frac{|\partial \Omega|}{|M|}$. Here $|\partial \Omega|$ denotes the $n-1$-dimensional area of $\partial \Omega$, and the infimum is taken for all $\Omega$ satisfying the volume constraint. It is known (cf. Chapter VI of \cite{Sakai}) that $h_1(\beta, g)$ is continuous (in fact H\"older continuous), satisfying the symmetry $h_1(\beta, g)=h_1(1-\beta, g)$. Moreover it has the asymptotics (cf. Proposition 1.3 of Chapter VI in \cite{Sakai}):
\begin{equation}\label{eq-asy}
\lim_{\beta\to 0} \frac{h_1(\beta, g)}{\beta^{\frac{n-1}{n}}}=n\frac{\sigma_n^{1/n}}{|M|^{1/n}},
\end{equation}
where $\sigma_n$ denotes the volume of the unit ball in the Euclidean space $\R^n$.

The isoperimetric inequality of L\'evy-Gromov \cite{Gromov} asserts the the following:
\begin{theorem}[L\'evy-Gromov]\label{thm-gromov} Assume that the Ricci curvature of $(M, g)$, $Ric_g\ge (n-1)\kappa g$ for some $\kappa>0$. Then
$$
h_1(\beta, g)\ge h_1(\beta, g_\kappa)
$$
where $(M_k, g_k)$ is the space form of constant sectional curvature $k$.
\end{theorem}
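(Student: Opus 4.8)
The plan is to derive Theorem~\ref{thm-gromov} from the viscosity super-solution property of the normalized profile together with a comparison principle against the explicitly known profile of the model space form. Writing $I(V)$ for the unnormalized profile (least perimeter enclosing volume $V$), one has $h_1(\beta,g)=I(\beta|M|)/|M|$, and the relevant operator is
\[
\mathcal{F}[u]=u\,u''+\frac{(u')^2}{n-1}+(n-1)\kappa .
\]
A direct change of variables shows $I\,I''=h_1h_1''$ and $(I')^2=(h_1')^2$, so $\mathcal{F}$ is unchanged by the normalization and may be read in the $\beta$ variable. The content I would use is that, under $\operatorname{Ric}_g\ge (n-1)\kappa\,g$, the profile $h_1(\cdot,g)$ is a viscosity super-solution of the degenerate equation $u''=-\tfrac1u\bigl(\tfrac{(u')^2}{n-1}+(n-1)\kappa\bigr)$; equivalently $\mathcal{F}[h_1(\cdot,g)]\le 0$ in the barrier sense. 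This is exactly Theorem~\ref{thm-main1} (established below), and it encodes the second variation of area together with the Cauchy--Schwarz bound $|A|^2\ge H^2/(n-1)$ and the Ricci hypothesis.

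Second, I would verify that the model profile $\bar h(\beta):=h_1(\beta,g_\kappa)$ is a genuine classical solution $\mathcal{F}[\bar h]=0$. On $(M_\kappa,g_\kappa)$ isoperimetric domains are geodesic balls, so $\bar h$ is given in closed form through $A(r)=\omega_{n-1}(\sin_\kappa r)^{n-1}$ and the corresponding $V(r)$; using $\bar h'=H=(n-1)\cot_\kappa r$ one computes $\bar h\,\bar h''=-(n-1)\kappa-(\bar h')^2/(n-1)$, i.e. equality in $\mathcal{F}$, together with strict concavity $\bar h''<0$. Thus $\bar h$ is the barrier to compare against.

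The comparison is the heart of the matter. Suppose for contradiction that $w:=h_1(\cdot,g)-\bar h$ attains a negative minimum at some $\beta_0\in(0,1)$, with value $c=w(\beta_0)<0$. Then the smooth function $\phi:=\bar h+c$ touches $h_1(\cdot,g)$ from below at $\beta_0$, so the viscosity super-solution property forces $\mathcal{F}[\phi](\beta_0)\le 0$. But $\phi'=\bar h'$ and $\phi''=\bar h''$, whence $\mathcal{F}[\phi](\beta_0)=\mathcal{F}[\bar h](\beta_0)+c\,\bar h''(\beta_0)=c\,\bar h''(\beta_0)>0$, a contradiction; so $w$ has no negative interior minimum. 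To conclude $w\ge 0$ I would rule out the infimum escaping to the endpoints: by \eqref{eq-asy}, $h_1(\beta,g)\sim n\sigma_n^{1/n}\beta^{(n-1)/n}|M|^{-1/n}$ and $\bar h(\beta)\sim n\sigma_n^{1/n}\beta^{(n-1)/n}|M_\kappa|^{-1/n}$ as $\beta\to0$, and the Bishop--Gromov comparison $|M|\le|M_\kappa|$ gives $w\ge0$ for small $\beta$; the symmetry $h_1(\beta,g)=h_1(1-\beta,g)$ yields the same near $\beta=1$. If $|M|=|M_\kappa|$ then Bishop--Gromov rigidity identifies $M$ with the round sphere and the conclusion is immediate, so one may assume $|M|<|M_\kappa|$ and the endpoint inequalities are strict. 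Hence $w\ge0$ on $(0,1)$, which is the assertion.

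The step I expect to be the genuine difficulty is the degeneracy of $\mathcal{F}$ at the endpoints, where $u$—the coefficient of $u''$—vanishes exactly where the asymptotic matching takes place; making the ``no negative interior minimum'' argument survive the passage to $\beta=0,1$ requires the strictness extracted from $|M|<|M_\kappa|$, and possibly a small perturbation $\bar h\mapsto\bar h-\varepsilon$ to turn the comparison strict before taking limits. Granting the super-solution property of Theorem~\ref{thm-main1}—whose proof is itself the crux, resting on existence and partial regularity of isoperimetric regions and on justifying the second-variation inequality in the viscosity sense at the non-smooth points of $h_1$—the remaining ingredients, namely the closed-form model computation and the endpoint asymptotics, are routine.
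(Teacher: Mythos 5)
Your proposal is correct and follows essentially the same route as the paper: you invoke Theorem \ref{thm-main1}, observe that the model profile $\bar h=h_1(\cdot,g_\kappa)$ is a classical solution (with $\bar h''<0$), and test the viscosity inequality at an interior negative minimum of $h_1(\cdot,g)-\bar h$ with the vertically shifted model $\bar h+c$ --- which is exactly the support function the paper constructs --- so your contradiction $c\,\bar h''(\beta_0)>0$ is just an algebraic rearrangement of the paper's ratio argument $\varphi/h_{1,g_1}<1$. The only deviations are harmless: your appeal to Bishop--Gromov ($|M|\le|M_\kappa|$) at the endpoints is not needed, since both profiles tend to $0$ as $\beta\to 0,1$ by \eqref{eq-asy} and symmetry, which already forces a negative infimum to be attained in the interior.
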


We prove the following result which  implies the above inequality via a maximum principle for viscosity solutions. This argument avoids the estimate of Heintze-Karcher.

\begin{theorem}\label{thm-main1} Assume that the Ricci curvature of the compact manifold $(M^n, g)$, $Ric_g\ge \kappa(n-1) g$. The isoperimetric profile function  $h_1(\beta, g)$, as a function of $\beta$, is a positive viscosity supersolution (over $(0, 1)$) of the differential equation:
\begin{equation}\label{eq:vis}
-\psi'' \psi = (n-1)\left(k+\left(\frac{\psi'}{n-1}\right)^2\right).
\end{equation}
\end{theorem}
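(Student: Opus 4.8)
The plan is to verify the defining property of a viscosity supersolution directly, by exhibiting a smooth upper barrier built from a genuine family of competitor regions. Write the equation as $F(\psi,\psi',\psi''):=-\psi''\psi-(n-1)k-\tfrac{(\psi')^2}{n-1}=0$. Since $h_1>0$ on $(0,1)$, the coefficient of $\psi''$ is negative, so $F$ is degenerate elliptic in $\psi''$, and the assertion to be proved is: whenever a $C^2$ function $\phi$ touches $h_1$ from below at an interior point $\beta_0$ (that is, $\phi\le h_1$ near $\beta_0$ and $\phi(\beta_0)=h_1(\beta_0)$), one has $-\phi''(\beta_0)\phi(\beta_0)\ge (n-1)\bigl(k+(\phi'(\beta_0)/(n-1))^2\bigr)$. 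I would fix such a $\phi$ and construct a smooth $a$ with $a\ge h_1$ locally, $a(\beta_0)=h_1(\beta_0)$, for which the same inequality holds; since $a-\phi\ge0$ has an interior minimum at $\beta_0$ (because $\phi\le h_1\le a$ with equality there), this forces $\phi'(\beta_0)=a'(\beta_0)$ and $\phi''(\beta_0)\le a''(\beta_0)$, and as $\phi(\beta_0)=a(\beta_0)>0$ the inequality transfers from $a$ to $\phi$.

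The barrier comes from an isoperimetric region. First I would invoke the existence of a minimizer $\Omega_0$ with $|\Omega_0|=\beta_0|M|$, whose boundary $\Sigma_0=\partial\Omega_0$ is a hypersurface of constant mean curvature $H_0$, smooth away from a set of codimension at least $7$. Flowing $\Sigma_0$ by its unit outward normal yields a family $\Sigma_t$ bounding regions of volume $V(t)$ and area $A(t)$; defining $a(\beta):=A(t)/|M|$ where $V(t)=\beta|M|$ gives a smooth function (since $\dot V(0)=A_0>0$ makes $t\mapsto\beta$ a local diffeomorphism) with $a(\beta_0)=h_1(\beta_0)$ and $a(\beta)\ge h_1(\beta)$ nearby, because each $\Sigma_t$ is an admissible competitor for the volume it encloses. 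From $\dot V=\int_{\Sigma_t}1=A(t)$ and $\dot A=\int_{\Sigma_t}H$, at $t=0$ (where $H\equiv H_0$) the first variation gives $a'(\beta_0)=H_0$.

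The heart of the argument is the second variation, where the curvature hypothesis enters. Using $\partial_t(dA_t)=H\,dA_t$ together with the Riccati evolution $\partial_t H=-|\mathrm{II}|^2-\Ric(\nu,\nu)$ along the unit-speed normal flow, I obtain
\[
\ddot A(0)=\int_{\Sigma_0}\bigl(H_0^2-|\mathrm{II}|^2-\Ric(\nu,\nu)\bigr)\,dA .
\]
Now the two geometric inputs appear: the Cauchy--Schwarz inequality $|\mathrm{II}|^2\ge H_0^2/(n-1)$ on the $(n-1)$-dimensional $\Sigma_0$, and the hypothesis $\Ric(\nu,\nu)\ge (n-1)k$, which bound $\ddot A(0)\le A_0\bigl(\tfrac{n-2}{n-1}H_0^2-(n-1)k\bigr)$ with $A_0=|\Sigma_0|$. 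Feeding this, together with $\dot V=A$ and $\ddot V=\dot A=H_0A_0$, into the chain rule $a''=|M|\,(\ddot A\,\dot V-\dot A\,\ddot V)/\dot V^3$ evaluated at $\beta_0$ collapses to
\[
-a''(\beta_0)\,a(\beta_0)\ \ge\ (n-1)k+\frac{(a'(\beta_0))^2}{n-1}=(n-1)\Bigl(k+\bigl(\tfrac{a'(\beta_0)}{n-1}\bigr)^2\Bigr),
\]
so $a$ satisfies \eqref{eq:vis} with ``$\ge$''. Combined with the comparison of second derivatives from the first paragraph, this is exactly the supersolution inequality for $\phi$, and since $\phi$ was arbitrary it proves the theorem.

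I expect the genuine obstacle to be not this differential identity but the regularity theory it rests on: one must know that an isoperimetric region exists and that its boundary is regular enough to run the normal flow and to make sense of $\int_{\Sigma_0}|\mathrm{II}|^2$. For $n\le 7$ the minimizing boundary is smooth and everything is literal; in higher dimensions I would need to control the singular set (of Hausdorff codimension at least $7$) and argue it affects neither the barrier construction nor the integrals, leaning on the geometric-measure-theoretic results behind the existence statement. A minor but necessary check is that the passage to the normalized variable $\beta=V/|M|$ leaves \eqref{eq:vis} invariant, which holds because the factors of $|M|$ cancel; as a consistency test, the profile of the model space form $M_k$ realizes \eqref{eq:vis} as an equality, confirming both the normalization and the sign.
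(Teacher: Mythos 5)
Your proposal follows, in substance, the same route as the paper: both arguments rest on an isoperimetric region $\Omega_0$ whose regular boundary has constant mean curvature, the first variation identity identifying that mean curvature with $\psi'(\beta_0)$ (up to the normalizing factor $n-1$), and a second-order computation whose only geometric inputs are the trace inequality $|\mathrm{II}|^2\ge (\operatorname{tr}\mathrm{II})^2/(n-1)$ and the hypothesis $\operatorname{Ric}(\nu,\nu)\ge (n-1)k$. Your packaging differs slightly: you build an explicit smooth upper barrier $a(\beta)$ by writing area as a function of enclosed volume along the unit-speed normal flow and differentiate via the Riccati equation $\partial_t H=-|\mathrm{II}|^2-\operatorname{Ric}(\nu,\nu)$, whereas the paper tests $F''(0)\ge 0$ for $F(t)=|N_t|/|M|-\psi(\beta(t))$ using the second variation formula for a normal variation $\eta\nu$; for $\eta\equiv 1$ these are the identical computation, and your barrier-to-test-function transfer at the touching point (equality of first derivatives, ordering of second derivatives, positivity of $a=\phi$ at $\beta_0$) is a correct and standard reduction. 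So in dimensions $n\le 7$, where the minimizing boundary is smooth, your proof is complete.

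The genuine gap is the one you flag in your last paragraph and do not close: the singular set when $n\ge 8$. Your construction needs the unit normal flow of the \emph{entire} boundary, which is undefined at singular points; moreover the competitor property $a(\beta)\ge h_1(\beta)$ --- that the swept sets have the expected volume and perimeter at most $A(t)$ --- is exactly what becomes unclear when the flow exists only on the regular part, so deferring to ``the geometric-measure-theoretic results behind the existence statement'' does not by itself repair the argument. This is precisely why the paper works with variations $\eta\nu$ where $\eta$ is compactly supported in the regular part $N$: the second variation inequality then carries the extra term $\frac{1}{|M|}\int_N|\nabla\eta|^2$, and since the singular set has codimension at least $7>2$ it has zero capacity, so one may choose cutoffs $\eta_j\to 1$ with $\int_N|\nabla\eta_j|^2\to 0$ and recover the $\eta\equiv 1$ inequality in the limit (applying the trace inequality to $-\eta_j^2|\mathrm{II}|^2$ \emph{before} passing to the limit, so that every term stays bounded --- $H$ is constant and $\operatorname{Ric}$ is bounded). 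If you replace your global flow by this localized variation plus cutoff approximation, keeping your Riccati/chain-rule computation verbatim as the limiting case, your proof becomes complete in all dimensions.
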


Before we prove the result we first derive Theorem \ref{thm-gromov} from the above. First observe that
$h_1(\beta, g_1)$ is a smooth solution to (\ref{eq:vis}) on $(0, 1)$. By scaling it suffices to prove it for $k=1$. Assume that the claimed estimate in Theorem \ref{thm-gromov}  fails. Then by the asymptotics and the symmetry,  there exists $\beta_0\in (0, 1)$ such that $h_1(\beta, g)-h_1(\beta, g_1)$ attains its negative minimum. Now  in a small neighborhood of $\beta_0$, there exists a smooth $\varphi(\beta)$ such that $\varphi(\beta)\le h_1(\beta, g)$ and $\varphi(\beta_0)=h_1(\beta_0, g)$. This support function can be constructed easily from $h_1(\beta, g_1)$ which is smooth. Moreover we have that
\begin{equation}\label{eq:vis2}
-\varphi'' \varphi\ge  (n-1)\left(1+\left(\frac{\varphi'}{n-1}\right)^2\right).
\end{equation}
On the other hand by the above $\varphi(\beta)-h_1(\beta, g_1)$ attains a local negative minimum at $\beta_0$. Hence we have that $h_1(\beta_0, g_1)>\varphi(\beta_0)>0$,  $\varphi'(\beta_0)=h_1'(\beta_0, g_1)$ and $\varphi''(\beta_0)\ge h_1''(\beta_0, g_1)$. By writing $h_1(\beta, g_1)$ as $h_{1, g_1}(\beta)$, this implies that at $\beta_0$,
\begin{eqnarray*}
-\varphi(\beta) \varphi''(\beta) &\le& -\varphi(\beta) h_{1, g_1}''(\beta)\\
&=& \frac{\varphi(\beta)}{h_{1, g_1}(\beta)} (-h_{1, g_1}(\beta) h_{1, g_1}''(\beta))\\
&=& \frac{\varphi(\beta)}{h_{1, g_1}(\beta)} \cdot (n-1)\left(1+\left(\frac{\varphi'(\beta)}{n-1}\right)^2\right)
\end{eqnarray*}
which is a contradiction to (\ref{eq:vis2}), by noting that $ \frac{\varphi(\beta_0)}{h_{1, g_1}(\beta_0)}<1$.

Now we prove Theorem \ref{thm-main1}. By the definition we  need to verify that for any $\beta_0$, and a small neighborhood $U$ of it, a smooth function  $0<\psi(\beta)\le h_1(\beta, g)$ in $U$ with $\psi(\beta_0)= h_1(\beta_0, g)$, the equation (\ref{eq:vis2}) holds at $\beta=\beta_0$. Let $\Omega$ be the domain  minimizing $|\partial \Omega|$ with $|\Omega|=\beta_0 |M|$. Let $\partial \Omega$ denote the boundary of $\Omega$. By the regularity theorem \cite{Simon}, $\partial \Omega$ is a smooth hypersurface except a singular set of Hausdorff codimension $7$. The mean curvature of $N$, the smooth part,  is defined  and is a constant. For a small region $D$ of $N$, we may consider the variation given by $\exp_{x} (t\eta(x)  \nu(x))$ with $\nu$ being the unit outward normal, $\eta$ being a  function supported in $D$. Let $N_t$ be this {\it variation} of N and let $\Omega_t$ be the domain bounded by $N_t$ (together with the irregular part of $\partial \Omega$, which not altered). Recall that $\exp_N((x, t))=\exp_x(t\nu(x))$. Simple calculation shows that if $J(\exp_N)|_{(x, s)}=a(x, s)$ with $a(x, 0)=1$,
\begin{eqnarray*}
|\Omega_t|&=&|\Omega|+ \int_D \int_0^{t\eta} a(x, s)\, ds d\mu_{g_{N}}, \\
\left.\frac{d}{dt}|\Omega_t|\right|_{t=0}&=&\int_N \eta \, d\mu_{g_N}.
\end{eqnarray*}
Recall that the 1st variation formula for the submanifolds  also gives
$$
\left.\frac{d}{dt}|N_t|\right|_{t=0}=(n-1)\int_N \eta H \, d\mu_{g_{N}}.
$$
Let $\beta(t)=\frac{|\Omega_t|}{|M|}$. It is easy to see that $\psi(\beta(t))\le \frac{|N_t|}{|M|}$ and $\psi(\beta(0))=\psi(\beta_0)=\frac{|N_0|}{|M|}.$ Now let $F(t)=\frac{|N_t|}{|M|}-\psi(\beta(t))$ which attains a local minimum at $t=0$. The first variation formula yields that
\begin{equation}\label{eq:1st}
(n-1)H=\psi'(\beta_0)
\end{equation}
Note that $\frac{d}{dt}|_{t=0} \psi(\beta(t))=\psi'(\beta_0)\frac{1}{|M|}\int_N \eta$.
The fact that $F''(0)\ge 0$ and the second variational formula (cf. page 8 of \cite{Li}) yields at $t=0$ ($\beta=\beta_0$)
\begin{equation}\label{eq:2nd1}
\frac{1}{|M|}\int_N |\nabla \eta|^2+(\eta (n-1)H)^2-\eta^2h_{ij}^2-\eta^2 \operatorname{Ric}(\nu, \nu)\ge \psi'' \left(\frac{1}{M}\int_N \eta\right)^2 +\psi' \frac{(n-1)H}{|M|}\int_N \eta.
\end{equation}
The smallness of the singular set allows $\eta  =1$, via approximations. Hence we have that  for $\beta=\beta_0$
\begin{equation}\label{eq:2nd2}
-\psi'' \psi^2 \ge  (n-1) \psi \left(\frac{\psi'}{n-1}\right)^2 +\frac{1}{|M|}\int_N \operatorname{Ric}(\nu, \nu).
\end{equation}
This proves the claimed differential inequality by cancelation and using $\operatorname{Ric}(\nu, \nu)\ge k(n-1)$.

Consequences include the following result for the case of $\kappa=0$ and $\kappa=-1$.

 \begin{corollary}\label{coro-1} (i) Assume that the Ricci curvature of $(M, g)$, $Ric_g\ge 0$. The isoperimetric profile function  $h_1(\beta, g)$, as a function of $\beta$, is a positive super solution of the differential equation:
\begin{equation}\label{eq:visk0}
-\psi'' \psi = (n-1)\left(\frac{\psi'}{n-1}\right)^2.
\end{equation}

 (ii) Assume that the Ricci curvature of $(M, g)$, $Ric_g\ge -(n-1) g$. The isoperimetric profile function  $h_1(\beta, g)$, as a function of $\beta$, is a positive super solution of the differential equation:
\begin{equation}\label{eq:visk-1}
-\psi'' \psi = (n-1)\left(-1+\left(\frac{\psi'}{n-1}\right)^2\right).
\end{equation}

 \end{corollary}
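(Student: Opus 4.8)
The plan is to read off both assertions directly from Theorem \ref{thm-main1}, since the corollary is precisely that theorem applied to the two values $\kappa = 0$ and $\kappa = -1$. The essential observation I would make first is that the proof of Theorem \ref{thm-main1} nowhere uses the sign of $\kappa$: the curvature hypothesis enters only at the very last step, through the pointwise bound $\Ric(\nu,\nu) \ge \kappa(n-1)$ used to pass from \eqref{eq:2nd2} to the claimed differential inequality. Consequently the entire viscosity-supersolution argument---the construction of a smooth support function $\psi$ touching $h_1(\beta,g)$ from below at $\beta_0$, the first variation identity \eqref{eq:1st}, and the second variation inequality \eqref{eq:2nd1}--\eqref{eq:2nd2}---goes through verbatim for any real $\kappa$.

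For part (i) I would simply set $\kappa = 0$: the hypothesis $\Ric_g \ge 0$ is exactly $\Ric_g \ge \kappa(n-1)g$ with $\kappa = 0$, and substituting $k = 0$ into \eqref{eq:vis} yields \eqref{eq:visk0}. For part (ii) I would set $\kappa = -1$: the hypothesis $\Ric_g \ge -(n-1)g$ is $\Ric_g \ge \kappa(n-1)g$ with $\kappa = -1$, and substituting $k = -1$ into \eqref{eq:vis} produces the sign-changed right-hand side of \eqref{eq:visk-1}. In each case the conclusion is that $h_1(\beta,g)$ is a positive viscosity supersolution of the displayed equation on $(0,1)$.

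The only points needing a word of care---and these are where I would concentrate the brief verification---are positivity and the direction of the inequality in the $\kappa = -1$ case. Positivity of $h_1(\beta,g)$ on $(0,1)$ is independent of the curvature bound, since any region $\Omega$ with $0 < |\Omega| < |M|$ has nonempty boundary of positive area on a connected $M$; thus I would invoke it exactly as in Theorem \ref{thm-main1}. For the direction of the inequality, I would note that in deriving \eqref{eq:2nd2} the term $\frac{1}{|M|}\int_N \Ric(\nu,\nu)$ is bounded below by $\kappa(n-1)\psi$, so that a negative $\kappa$ simply weakens this lower bound in the expected way and still reproduces \eqref{eq:visk-1} after the same cancellation. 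Since no step in the original proof depended on $\kappa$ being positive, I anticipate no genuine obstacle: the corollary is a pure specialization.
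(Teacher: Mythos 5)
Your proposal is correct and matches the paper exactly: the paper states Theorem \ref{thm-main1} and its proof for an arbitrary real $\kappa$ (the sign entering only through $\operatorname{Ric}(\nu,\nu)\ge\kappa(n-1)$, and $|N|/|M|=\psi(\beta_0)$ exactly at the touching point, so no inequality direction is at risk), and then presents Corollary \ref{coro-1} as the immediate specialization to $\kappa=0$ and $\kappa=-1$ with no separate proof. Your additional checks on positivity and the sign of $\kappa$ are sound but, as you anticipated, not genuinely needed.
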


\section{Comparisons from above on manifolds with  Ricci  lower bound}

Motivated with the consideration of the last section we consider $(M, g)$ with $\operatorname{Ric}_g \ge (n-1)\kappa$, where $\kappa$  is a constant, but not necessarily satisfying $\kappa> 0$. This of courses allows manifolds with infinity volume. Now we define  $h_2(\beta, g)$, another profile function which is natural for this setting, as $\inf |\partial \Omega|$ among all $\Omega$ such that $|\Omega|=\beta$. Clearly $h_2(\beta, g)$ is now defined for $(0, |M|)$. When $|M|<\infty$, $h_2(\beta, g)=|M|h_1(\frac{\beta}{|M|}, g)$.

The following comparison result holds for the profile function $h_2(\beta, g)$.

\begin{theorem}\label{thm-main2} Let $(M, g)$ be a complete Riemannian manifold with $\operatorname{Ric}_g \ge (n-1)\kappa$. Then
$$
h_2(\beta, g)\le h_2(\beta, g_\kappa)
$$
for $\beta\in (0, |M|)$. If the equality ever holds somewhere, $(M, g)$ must be isometric to the space form $(M, g_\kappa)$.
\end{theorem}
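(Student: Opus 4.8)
The plan is to bound $h_2(\cdot,g)$ from above by feeding geodesic balls into the definition of the profile as test regions, thereby reducing the statement to a first--order (Hamilton--Jacobi type) comparison for the area of a geodesic ball regarded as a function of its enclosed volume. Fix $p\in M$, and let $V_p(r)=|B(p,r)|$ and $A_p(r)=|\partial B(p,r)|$ denote the volume and boundary area of the geodesic ball of radius $r$. Since $V_p$ is strictly increasing we may invert it and set $\Phi_p(\beta)=A_p(r)$ where $V_p(r)=\beta$; because $B(p,r)$ is admissible for the constraint $|\Omega|=\beta$, we get $h_2(\beta,g)\le \Phi_p(\beta)$ for all $\beta\in(0,|M|)$. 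On the model $g_\kappa$ geodesic balls are isoperimetric, so the analogous function $\Phi_\kappa$ equals $h_2(\cdot,g_\kappa)$. Hence it suffices to prove $\Phi_p\le\Phi_\kappa$.

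For the first--order comparison I would write the area in geodesic polar coordinates as $A_p(r)=\int_{S^{n-1}}J(r,\theta)\,d\theta$, with $J$ the volume density extended by $0$ past the cut locus. Using $V_p'(r)=A_p(r)$, the chain rule gives
\begin{equation*}
\Phi_p'(\beta)=\frac{A_p'(r)}{A_p(r)}\le\frac{\int_{S^{n-1}}m(r,\theta)J(r,\theta)\,d\theta}{\int_{S^{n-1}}J(r,\theta)\,d\theta}=:\bar m(r),
\end{equation*}
the $J$--weighted average of the mean curvature $m=\partial_r\log J$ of the geodesic sphere (the inequality records the downward jumps of $J$ at the cut locus, which only help). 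The Laplacian comparison theorem turns $\operatorname{Ric}_g\ge(n-1)\kappa$ into the pointwise bound $m(r,\theta)\le m_\kappa(r)$, where $m_\kappa(r)=(n-1)s_\kappa'(r)/s_\kappa(r)$ is the model mean curvature and $s_\kappa$ solves $s_\kappa''+\kappa s_\kappa=0$, $s_\kappa(0)=0$, $s_\kappa'(0)=1$; averaging gives $\bar m(r)\le m_\kappa(r)$. On the model side $\Phi_\kappa'(\beta)=m_\kappa(\rho)$ with $V_\kappa(\rho)=\beta$, and Bishop's inequality $V_p(r)\le V_\kappa(r)$ forces $\rho\le r$. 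Since $m_\kappa'=-(n-1)\kappa-m_\kappa^2/(n-1)<0$, the function $m_\kappa$ is strictly decreasing, so $m_\kappa(r)\le m_\kappa(\rho)$. Chaining these yields $\Phi_p'(\beta)\le\Phi_\kappa'(\beta)$.

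To finish the inequality I would integrate: both $\Phi_p$ and $\Phi_\kappa$ share the Euclidean small--volume asymptotics $n\sigma_n^{1/n}\beta^{(n-1)/n}$ (the tangent space is flat), so $\Phi_p-\Phi_\kappa\to 0$ as $\beta\to0$; combined with $(\Phi_p-\Phi_\kappa)'\le 0$ this gives $\Phi_p\le\Phi_\kappa$, and hence $h_2(\beta,g)\le\Phi_p(\beta)\le\Phi_\kappa(\beta)=h_2(\beta,g_\kappa)$. I expect the main obstacle to be precisely the lack of smoothness of $A_p$ caused by the cut locus: the identity $V_p'=A_p$, the evaluation of $\Phi_p'$, and the one--sided inequality above must be justified in the barrier / distributional sense, which is exactly the viscosity viewpoint of the preceding sections. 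The key point making this work is that the cut locus enters with the favorable sign, so the one--sided bound $\Phi_p'\le\Phi_\kappa'$ persists.

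For the rigidity statement, suppose $h_2(\beta_0,g)=h_2(\beta_0,g_\kappa)$. The chain $h_2(\beta_0,g)\le\Phi_p(\beta_0)\le\Phi_\kappa(\beta_0)=h_2(\beta_0,g_\kappa)$ holds for every center $p$, so equality forces $\Phi_p(\beta_0)=\Phi_\kappa(\beta_0)$ for all $p$. As $\Phi_p-\Phi_\kappa$ is non--increasing, vanishes at $\beta_0$, and tends to $0$ at $\beta=0$, it must vanish on $(0,\beta_0]$; thus $\Phi_p'=\Phi_\kappa'$ there, forcing equality in both the Laplacian comparison ($m(r,\theta)=m_\kappa(r)$) and Bishop's comparison ($\rho=r$, using that $m_\kappa$ is strictly decreasing) along every ball $B(p,r)$ enclosing volume at most $\beta_0$. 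The equality cases of these comparison theorems identify each such ball with a model ball; letting $p$ range over $M$ shows $(M,g)$ has constant curvature $\kappa$, and the rigidity of the volume comparison then yields that $(M,g)$ is isometric to the space form $(M,g_\kappa)$.
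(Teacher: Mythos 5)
Your proposal is correct and follows essentially the same route as the paper: the paper also tests the profile with geodesic balls (its $I_p(\beta,g)$ is your $\Phi_p$), identifies the derivative with the averaged mean curvature, and chains the Laplacian comparison, the Bishop volume comparison (giving $\bar r\le r$), and the monotonicity of the model mean curvature to get $I_p'\le I_{\bar p}'$, then integrates from $\beta=0$ using a Dini-derivative lemma, which plays exactly the role of your barrier/viscosity justification at the cut locus. Your rigidity discussion likewise mirrors the paper's, which deduces it from equality in the volume comparison applied to balls with varying centers.
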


Note that the famous Cartan-Hadamard conjecture asserts the opposite estimate if $(M, g)$ is a Cartan-Hadamard manifold with the sectional curvature $K_M \le \kappa\le 0$. The result is an analogue of the eigenvalue comparison result of Cheng \cite{Cheng}. This result was first proved in \cite{MorganJohnson}. Below is an alternate argument.

This profile function satisfies the scaling law
$h_2(\beta, cg)=c^{\frac{n-1}{2}}h_2(c^{-\frac{n}{2}}\beta, g)$.  Hence it suffices to prove for cases $\kappa=-1, 0, 1$. For the proof we need the following simple lemma (one can find its proof for example in \cite{Ni}).

\begin{lemma}\label{lemma31}   Let  $\rho(t)$ be a continuous function on $[0, b]$.  Assume  that $\rho(0)\le 0$ and there exist some positive constants $\epsilon, C$ such that   $D^{-}\rho \le C \rho$, whenever $0<\rho(t)\le \epsilon$. Then $\rho(b)\le 0$. The same result holds if $D^{-}$ is replaced by $D^{+}$, $D_{-}$ or $D_{+}$.\end{lemma}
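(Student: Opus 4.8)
The plan is to argue by contradiction and reduce everything to the elementary monotonicity theory for Dini derivatives, using an integrating factor to absorb the constant $C$. Suppose $\rho(b)>0$. Since $\rho$ is continuous, $\rho(0)\le 0$, and $\rho(b)>0$, the set $\{t\in[0,b]:\rho(t)\le 0\}$ is closed, nonempty, and does not contain $b$; let $t_1$ be its supremum. Then $\rho(t_1)=0$ by continuity (it is both a limit of points where $\rho\le 0$ and a left limit of the strictly positive values to its right), and $\rho>0$ on all of $(t_1,b]$. Shrinking a parameter $\delta>0$ I may assume, again by continuity, that $0<\rho\le\epsilon$ holds throughout $(t_1,T]$ where $T:=t_1+\delta$. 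This is exactly the band in which the hypothesis $D^-\rho\le C\rho$ is available, so the differential inequality is valid at every point of the open interval $(t_1,T)$.

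The key step is to pass to $w(t):=e^{-C(t-t_1)}\rho(t)$ on $[t_1,T]$. Because the factor $f(t)=e^{-C(t-t_1)}$ is differentiable and strictly positive, each of the four Dini derivatives obeys the product rule $Dw(t)=f(t)\,D\rho(t)+f'(t)\rho(t)=f(t)\big(D\rho(t)-C\rho(t)\big)$; this is a direct consequence of writing the difference quotient of $fg$ as $f(t+h)\frac{g(t+h)-g(t)}{h}+\frac{f(t+h)-f(t)}{h}\,g(t)$ and letting the differentiable factor pass through the $\limsup$ or $\liminf$. Using $D\rho\le C\rho$ on $(t_1,T)$ and $f>0$, I obtain $Dw\le 0$ on $(t_1,T)$, for whichever of $D^-,D^+,D_-,D_+$ appears in the hypothesis. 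Thus the statement is reduced, uniformly in the choice of Dini derivative, to a monotonicity claim for $w$.

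I then invoke the classical fact that a continuous function one of whose Dini derivatives is $\le 0$ on an interval is nonincreasing there. Applied to $w$ on $[t_1+\sigma,T]$ for every small $\sigma>0$ and combined with continuity at $t_1$, this gives $w$ nonincreasing on $[t_1,T]$, whence $w(T)\le w(t_1)=\rho(t_1)=0$ and therefore $\rho(T)=e^{C\delta}w(T)\le 0$. Since $T\in(t_1,b]$ forces $\rho(T)>0$, this is the desired contradiction, and hence $\rho(b)\le 0$. The same chain of reasoning proves all four versions at once, since the only place the choice of $D$ entered was the product-rule identity, which holds verbatim for each.

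The main obstacle is the monotonicity lemma for Dini derivatives itself, together with the correct handling of the boundary point $t_1$ where $\rho$ leaves the band $0<\rho\le\epsilon$. I would establish the monotonicity lemma by the standard perturbation device: for $\eta>0$ the auxiliary function $w(t)+\eta t$ has strictly negative Dini derivative, so it can have no point at which it exceeds an earlier value (a first such point would contradict strict negativity of the one-sided difference quotients), forcing $w(t)+\eta t$ to be nonincreasing; letting $\eta\to 0$ yields the claim. The care required is to carry out this first-violation argument with the one-sided quotient matched to the particular Dini derivative in play, and to ensure the inequality is only ever used on $(t_1,T)$ where $0<\rho\le\epsilon$ genuinely holds; the continuity of $\rho$ at $t_1$, where $\rho=0$ lies outside the band, is what lets the conclusion propagate down to the endpoint.
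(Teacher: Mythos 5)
The paper never actually proves Lemma \ref{lemma31}: it is introduced as a ``simple lemma'' and its proof is deferred to the reference \cite{Ni}. Your argument is therefore a self-contained substitute, and its structure is correct: by contradiction, locate the last zero $t_1$ of $\rho$ before $b$ (closedness of $\{\rho\le 0\}$ plus continuity give $\rho(t_1)=0$ and $\rho>0$ on $(t_1,b]$); use continuity to stay inside the band $0<\rho\le\epsilon$ on $(t_1,T]$, which is exactly where the hypothesis applies; absorb the constant $C$ with the integrating factor $e^{-C(t-t_1)}$ --- the product rule for Dini derivatives with a positive differentiable factor is legitimate, since the difference quotient splits into a term whose first factor converges to $f(t)>0$ and a term that converges outright, so the $\limsup$/$\liminf$ identities hold in the extended reals; and finally invoke the classical fact that a continuous function with one Dini derivative $\le 0$ on an interval is nonincreasing, applied on $[t_1+\sigma,T]$ and pushed to $t_1$ by continuity. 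This yields $\rho(T)\le 0$, contradicting $T\in(t_1,b]$. This is the standard route and, in spirit, the same Gronwall-type argument the cited reference uses.

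One slip needs fixing, in your sketch of the auxiliary monotonicity lemma: you perturb by $+\eta t$, but since $Dw\le 0$ this only gives $D(w+\eta t)\le \eta$, which is useless. The perturbation must go the other way: $w_\eta(t)=w(t)-\eta t$ satisfies $Dw_\eta\le -\eta<0$; the first-violation argument (for the right derivatives $D^+,D_+$ take the supremum $c$ of $\{t: w_\eta(t)\le w_\eta(t_0)\}$, where all right difference quotients at $c$ are positive; for the left derivatives $D^-,D_-$ take the infimum $c$ of $\{t: w_\eta(t)\ge w_\eta(s_0)\}$, where all left difference quotients at $c$ are positive) then shows $w_\eta$ is nonincreasing, and letting $\eta\to 0$ gives the claim for $w$. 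With that sign corrected, your proof is complete and valid for all four Dini derivatives.
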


To prove Theorem \ref{thm-main2}, let $p\in M$ be a fixed point and introduce $I_p(\beta, g)=|\partial B_p(r)|$ with $|B_p(r)|=\beta$. Clearly $h_2(\beta, g)\le I_p(\beta, g)$ while $h_2(\beta, g_\kappa)=I_{\bar{p}}(\beta, g_\kappa)$ where $\bar{p}\in M_\kappa$ is a fixed point in the space form $M_\kappa$. The claimed result follows if we can establish that $I_p(\beta, g)\le h_2(\beta, g_\kappa)$.
Let $f(\beta)=I_p(\beta, g)-I_{\bar{p}}(\beta, g_\kappa)$. Since $f(0)=0$ it suffices to show that
$f'\le 0$ by Lemma \ref{lemma31}.  Let $B_{\bar{p}}(\bar{r})$ be the ball in $M_k$ such that $|B_{\bar{p}}(\bar{r})|=\beta$. Now the direct calculations shows that
$$
I_p'= \frac{n-1}{|\partial B_p(r)|}\int_{\partial B_p(r)} H(r, \theta); \quad
I_{\bar{p}}' = \frac{n-1}{|\partial B_{\bar{p}}(\bar{r})|}\int_{\partial B_{\bar{p}}(\bar{r})} \bar{H}(\bar{r})=(n-1)\bar{H}(\bar{r}).
$$
Here denotes the mean curvature of $\partial B_p(r)$ in terms of the polar coordinate and $\bar{H}(\bar{r})$ be the mean curvature of $\partial B_{\bar{p}}(\bar{r})$ in the space form $M_\kappa$. By the volume comparison theorem $|B_p(r)|\le |B_{\bar{p}}(r)|$, which implies that $\bar{r}\le r$ since
$\beta=|B_p(r)|=|B_{\bar{p}}(\bar{r})|$.  Also by the Laplacian comparison theorem $H(r, \theta)\le \bar{H}(r)$. Noting that $\bar{H}(s)$ (mean curvature of the spheres in $M_\kappa$) is a monotone non-increasing function. We then have
\begin{eqnarray*}
\frac{n-1}{|\partial B_p(r)|}\int_{\partial B_p(r)} H(r, \theta)&\le& (n-1) \bar{H}(r)\\
&\le & (n-1) \bar{H}(\bar{r}).
\end{eqnarray*}
This proves $f'\le 0$, hence the claimed inequality in Theorem \ref{thm-main2}. The equality case follows from the equality in the volume comparison (applying to balls with varying centers).

Combining Theorem \ref{thm-gromov}   and Theorem \ref{thm-main2} we have the two sided bounds below.

\begin{corollary}\label{coro-twosides}Assume that the Ricci curvature of $(M, g)$, $Ric_g\ge (n-1) g$. Then
$$
h_1(\beta, g_1)\le h_1(\beta, g)  \le  \frac{|\mathbb{S}^n|}{|M|} \cdot \, h_1\left(\frac{|M|}{|\mathbb{S}^n|}\beta, g_1\right) .$$
\end{corollary}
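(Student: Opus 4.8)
The plan is to read off the two bounds directly from the two comparison theorems already established, the only genuine task being to reconcile the two normalizations $h_1$ and $h_2$. For the left inequality there is nothing new: the hypothesis $\operatorname{Ric}_g \ge (n-1)g$ is exactly $\operatorname{Ric}_g \ge (n-1)\kappa g$ with $\kappa = 1$, whose comparison space form is $(M_1, g_1)$, so Theorem \ref{thm-gromov} gives $h_1(\beta, g_1) \le h_1(\beta, g)$ immediately.

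For the right inequality I would invoke Theorem \ref{thm-main2}, again with $\kappa = 1$. Here the model space $(M_1, g_1)$ is the unit round sphere $(\mathbb{S}^n, g_1)$, of volume $|\mathbb{S}^n|$; and since $\operatorname{Ric}_g \ge (n-1)g > 0$ forces $(M, g)$ to be compact by Bonnet--Myers, it is complete with $|M| < \infty$, so the hypotheses of Theorem \ref{thm-main2} hold and we obtain $h_2(\beta, g) \le h_2(\beta, g_1)$ for every $\beta \in (0, |M|)$. The remaining step is to translate this into a statement about $h_1$ using the relation $h_2(\beta, g) = |M|\, h_1(\beta/|M|, g)$ recorded at the start of Section 3, together with its analogue $h_2(\beta, g_1) = |\mathbb{S}^n|\, h_1(\beta/|\mathbb{S}^n|, g_1)$ on the sphere.

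Concretely, I would set $\alpha = \beta/|M| \in (0,1)$, so that $\beta/|\mathbb{S}^n| = (|M|/|\mathbb{S}^n|)\,\alpha$; substituting the two scaling relations into $h_2(\beta, g) \le h_2(\beta, g_1)$ and dividing by $|M|$ yields $h_1(\alpha, g) \le (|\mathbb{S}^n|/|M|)\, h_1\big((|M|/|\mathbb{S}^n|)\alpha, g_1\big)$, which is precisely the claimed upper bound after renaming $\alpha$ as $\beta$. The main (and essentially only) obstacle is bookkeeping: one must keep straight that the $h_1$-parameter is the volume fraction $|\Omega|/|M|$ while the $h_2$-parameter is the raw volume $|\Omega|$, so the two profiles end up being compared at the \emph{different} normalized volumes $\beta/|M|$ and $\beta/|\mathbb{S}^n|$---and it is exactly this discrepancy that produces the prefactor $|\mathbb{S}^n|/|M|$ and the rescaled argument $(|M|/|\mathbb{S}^n|)\beta$. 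One may also note that Bishop--Gromov gives $|M| \le |\mathbb{S}^n|$, so the argument $(|M|/|\mathbb{S}^n|)\beta$ remains in $(0,1)$ and the right-hand side is well defined; beyond this no further analysis is required.
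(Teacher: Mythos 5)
Your proposal is correct and follows exactly the route the paper intends: the paper derives this corollary by ``combining Theorem \ref{thm-gromov} and Theorem \ref{thm-main2},'' with the left bound being L\'evy--Gromov and the right bound obtained from $h_2(\beta,g)\le h_2(\beta,g_1)$ via the scaling relation $h_2(\beta,g)=|M|\,h_1(\beta/|M|,g)$. Your careful bookkeeping of the two normalizations (including the check via Bishop--Gromov that $(|M|/|\mathbb{S}^n|)\beta<1$) supplies precisely the details the paper leaves implicit.
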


The scaling relation between $h_2(\beta, g)$ and $h_1(\beta, g)$ yields that $h_2(\beta, g)$ also satisfying Theorem \ref{thm-main1} when $\kappa=1$. Namely on $(0, |M|)$, $h_2(\beta, g)$ is a viscosity super-solution of (\ref{eq:vis}).
This implies the following corollary.

\begin{corollary} Let $(M, g)$ be a compact Riemannian manifold with $\operatorname{Ric}_g \ge (n-1)\kappa$. Then if $\kappa> 0$,
$
\frac{h_2(\beta, g)}{h_2(\beta, g_\kappa)}
$ is a monotone non-increasing function on $(0, |M|)$.
\end{corollary}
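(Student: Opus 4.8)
The plan is to reduce everything to a one-variable comparison between the two profiles and to exploit the integrating-factor structure of the equation (\ref{eq:vis}). First I would use the scaling law $h_2(\beta,cg)=c^{(n-1)/2}h_2(c^{-n/2}\beta,g)$ to reduce to $\kappa=1$, so that (as recorded just above the statement) $\psi:=h_2(\cdot,g)$ is a positive viscosity supersolution of (\ref{eq:vis}) on $(0,|M|)$, while $\bar\psi:=h_2(\cdot,g_1)$ is a positive classical solution of the same equation on $(0,|M_1|)\supset(0,|M|)$ (the inclusion holds since $|M|\le|M_1|$ by Bishop's theorem, which is also what makes the ratio well defined). Two further inputs are available: Theorem \ref{thm-main2} gives $\psi\le\bar\psi$, and the asymptotics (\ref{eq-asy}) give $\psi/\bar\psi\to1$ as $\beta\to0^+$, because the leading behavior $n\sigma_n^{1/n}\beta^{(n-1)/n}$ of $h_2$ is independent of the metric. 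Thus $q:=\psi/\bar\psi$ satisfies $q\le1$ and $q(0^+)=1$, and the goal is to show $q'\le0$.

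The engine of the proof is the following computation, carried out first as if $\psi$ were smooth. Set $W:=\psi'\bar\psi-\psi\bar\psi'$, so that $q'=W/\bar\psi^2$ and $q'\le0\iff W\le0$. Differentiating $W$ and inserting the supersolution inequality for $\psi''$ together with the equation for $\bar\psi''$, the zeroth-order contributions combine into $(n-1)(\psi^2-\bar\psi^2)/(\psi\bar\psi)\le0$ (here $\kappa=1>0$ and $\psi\le\bar\psi$ are both used), while the first-order contributions reorganize, via the identity $\psi^2(\bar\psi')^2-\bar\psi^2(\psi')^2=-W(\psi\bar\psi)'$, into $-\,W(\psi\bar\psi)'/((n-1)\psi\bar\psi)$. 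Hence
\[
W'+\frac{(\psi\bar\psi)'}{(n-1)\,\psi\bar\psi}\,W\le0,
\]
and multiplying by the integrating factor $(\psi\bar\psi)^{1/(n-1)}$ shows that $G:=(\psi\bar\psi)^{1/(n-1)}W$ is non-increasing. Since the leading terms of $\psi'\bar\psi$ and $\psi\bar\psi'$ cancel, $G\to0$ as $\beta\to0^+$; being non-increasing with vanishing left limit, $G\le0$ throughout, i.e. $W\le0$ and $q$ is non-increasing.

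To make this rigorous I would transfer the argument to the quotient $q$, which is the correct viscosity object. If a smooth $\varphi$ touches $q$ from below at $\beta_0$, then $\varphi\bar\psi$ touches $\psi$ from below there; feeding $\varphi\bar\psi$ into the viscosity supersolution property of (\ref{eq:vis}) and dividing out $\bar\psi$ yields $\mathcal L[\varphi](\beta_0)\le0$, where
\[
\mathcal L[u]:=u''+\frac{2n}{n-1}\,\frac{\bar\psi'}{\bar\psi}\,u'+\frac{(u')^2}{(n-1)\,u}.
\]
Thus $q$ is a viscosity supersolution of $\mathcal L[u]=0$. The identity $G'=q^{1/(n-1)}\bar\psi^{2n/(n-1)}\,\mathcal L[q]$ shows that ``$\mathcal L[q]\le0$'' is exactly the viscosity form of ``$G$ is non-increasing,'' and I would run the monotonicity of the displayed inequality at the level of test functions, using $q(0^+)=1=\sup q$ as the boundary datum that pins down the propagation from the left endpoint.

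The main obstacle is precisely this last step. Because $\psi$ is only H\"older continuous, neither $W$ nor $G$ exists pointwise, so the clean statement ``$(\psi\bar\psi)^{1/(n-1)}W$ is non-increasing'' must be recovered entirely from the second-order viscosity inequality $\mathcal L[q]\le0$ together with boundary control at the degenerate endpoint $\beta\to0^+$, where $\bar\psi\to0$ makes the equation singular. I expect the endpoint to require the asymptotics (\ref{eq-asy}) (to guarantee $G\to0$, equivalently that $q$ approaches its supremum with controlled slope) and a maximum-principle/barrier argument---comparing $q$ against the constant solutions and the explicit space-form solutions of (\ref{eq:vis})---to convert the second-order supersolution property plus the pinned left endpoint into the first-order conclusion $q'\le0$, in the spirit of Lemma \ref{lemma31}.
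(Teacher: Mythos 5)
Your setup and your formal computation are both correct: the reduction to $\kappa=1$, the two inputs $q:=h_2(\cdot,g)/h_2(\cdot,g_1)\le 1$ (Theorem \ref{thm-main2}) and $q(0^+)=1$ (from (\ref{eq-asy})), the integrating-factor identity, and the derivation that a smooth $\varphi$ touching $q$ from below at $\beta_0$ satisfies $\mathcal{L}[\varphi](\beta_0)\le 0$. But the proof is incomplete exactly where you flag ``the main obstacle,'' and the gap is genuine rather than technical, because the reduction you chose destroys the mechanism needed to close it. In passing from (\ref{eq:vis}) to $\mathcal{L}[u]=0$ you used $q\le 1$ to discard the zeroth-order term: what feeding $\varphi\bar\psi$ into (\ref{eq:vis}) really gives is
\[
\varphi''+\frac{2n}{n-1}\,\frac{\bar\psi'}{\bar\psi}\,\varphi'+\frac{(\varphi')^2}{(n-1)\varphi}\;\le\;-\,\frac{(n-1)\bigl(1-\varphi^2\bigr)}{\varphi\,\bar\psi^{2}},
\]
where $\bar\psi=h_2(\cdot,g_1)$. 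Once the right-hand side is replaced by $0$, every constant is an exact solution of $\mathcal{L}[u]=0$, so at an interior minimum of $q$ the test functions one can always produce there (constants and downward parabolas) satisfy $\mathcal{L}\le 0$ automatically and yield no contradiction. Your plan therefore has to establish a comparison/propagation principle for the degenerate quasilinear operator $\mathcal{L}$ (quadratic gradient term, coefficient $\bar\psi'/\bar\psi$ blowing up at the singular endpoint $\beta=0$) valid for merely continuous viscosity supersolutions; Lemma \ref{lemma31} cannot substitute for this, since your quantities $W$ and $G$ are built from $q'$, which does not exist pointwise. That is substantially harder than the corollary itself, and you do not carry it out.

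The paper closes the argument in one stroke precisely by keeping the term you dropped. If $q$ fails to be non-increasing, then $q\le 1$ and $q(0^+)=1$ produce an interior minimum point $\beta_0$ with minimum value $\lambda<1$. The constant $\lambda$ touches $q$ from below at $\beta_0$; equivalently, $\lambda\bar\psi=\lambda h_2(\cdot,g_1)$ is a smooth positive lower support function for $h_2(\cdot,g)$ at $\beta_0$, so the viscosity supersolution property for (\ref{eq:vis}) applies to it and gives, at $\beta_0$,
\[
(n-1)\left(\lambda^{2}+\left(\frac{(\lambda\bar\psi)'}{n-1}\right)^{2}\right)
=-(\lambda\bar\psi)''(\lambda\bar\psi)
\;\ge\;(n-1)\left(1+\left(\frac{(\lambda\bar\psi)'}{n-1}\right)^{2}\right),
\]
forcing $\lambda^{2}\ge 1$, a contradiction. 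The whole mechanism is that the inhomogeneous term $(n-1)\kappa$ with $\kappa>0$ in (\ref{eq:vis}) is not invariant under $\psi\mapsto\lambda\psi$; this is exactly the zeroth-order term your reduction discarded, and it is also why the statement requires $\kappa>0$. So your computation is one observation away from the paper's pointwise argument, but as written the essential step is missing, and the route you propose for it is a detour into a harder problem than the one being solved.
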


\begin{proof} For $\kappa>0$, without the loss of generality we assume $\kappa=1$. Now  notice that $h_2(\beta, g_\kappa)$ is a smooth solution of (\ref{eq:vis}) and $h_2(\beta, g)$ is a viscosity super solution of (\ref{eq:vis}). By Theorem \ref{thm-main2} we have  that $
\frac{h_2(\beta, g)}{h_2(\beta, g_\kappa)}\le 1
$. If the claimed result does not hold, then one can find $0<\beta_1< \beta_2<|M|$ such that
$$
\frac{h_2(\beta_1, g)}{h_2(\beta_1, g_\kappa)}
<
\frac{h_2(\beta_2, g)}{h_2(\beta_2, g_\kappa)}.
$$
Then $\frac{h_2(\beta, g)}{h_2(\beta_1, g_\kappa)}$ achieves  minimum in $(0, \beta_2)$ which is strictly smaller than $1$, by Theorem \ref{thm-main2}. Now one can repeat the argument in  the proof of L\'evy-Gromov isoperimetric estimate to arrive a contradiction! Precisely, assume that the minimum is attained at $\beta_0$, for a neighborhood $U$ and a support function $\psi>0$ of $h_2(\beta, g)$, we have that $\frac{\psi(\beta)}{h_2(\beta_1, g_1)}$ attains a local minimum at $\beta_0$. Then at $\beta_0$
\begin{eqnarray}
&\quad&
\frac{\psi'(\beta)}{h_2'(\beta, g_1)}= \frac{\psi(\beta)}{h_2(\beta, g_1)}\doteqdot \lambda <1 \label{eq:q1}  \\
&\quad& \frac{\psi''(\beta)}{h_2(\beta, g_1)}-\frac{\psi(\beta) h_2''(\beta, g_1)}{h^2_2(\beta, g_1)}\ge 0. \label{eq:q2}
\end{eqnarray}
Combining (\ref{eq:q1}), (\ref{eq:q2}) and that $\psi$ satisfying (\ref{eq:vis2}) we have
\begin{eqnarray*}
(n-1)\left(1+\left(\frac{\psi'}{n-1}\right)^2\right)&\le& -\psi'' \psi \\
&\le& \lambda^2 \left(-h_2 h_2''\right)\\
&=& (n-1)\left(\lambda^2 +\left(\frac{\lambda h_2'}{n-1}\right)^2\right)\\
&=&  (n-1)\left(\lambda^2+\left(\frac{\psi'}{n-1}\right)^2\right).
\end{eqnarray*}
This is a contradiction since $\lambda<1$.
\end{proof}

\section{B\'erard-Besson-Gallot comparison via the viscosity}

Here using the ideas from the Section 2 we derive some first order equation satisfied by the profile function. This together with the maximum principle argument implies the improved lower estimate of B\'erard-Besson-Gallot \cite{BBG}. As in \cite{BBG} we need to use the Heintze-Karcher estimates (cf. Theorem 3.8 of Chapter IV in \cite{Sakai}), unlike in the case for L\'evy-Gromov's estimate.

For manifold $(M, g)$ with $\operatorname{Ric}(g)\ge (n-1)\kappa $, let $$
s_\kappa(t)=\left\{ \begin{matrix} \frac{1}{\sqrt{\kappa}}\sin \sqrt{\kappa} t, & \, \kappa>0,\cr
                                   t, &\, \kappa=0, \cr
                                   \frac{1}{\sqrt{|\kappa|}}\sinh \sqrt{|\kappa|}t, &\, \kappa<0;\end{matrix}\right.
\quad \quad
c_\kappa(t)=\left\{ \begin{matrix} \cos \sqrt{\kappa} t, & \, \kappa>0,\cr
                                   1, &\, \kappa=0, \cr
                                   \cosh \sqrt{|\kappa|}t, &\, \kappa<0.\end{matrix}\right.
$$
Let $d$ denote the diameter of the manifold. Since for the consideration in this section, one only gets the sharp result for $\kappa>0$, we shall focus on this case first. Define
$$
\lambda^\kappa_{n,d} = \int_{-\frac{d}{2}}^{\frac{d}{2}} c^{n-1}_\kappa(t)\, dt=\frac{1}{\sqrt{\kappa}}\int_{-\frac{\sqrt{\kappa}d}{2}}^{\frac{\sqrt{\kappa}d}{2}} \cos^{n-1}(t)\, dt.
$$

\begin{theorem}\label{thm-main3} Assume that the Ricci curvature of $(M, g)$, $Ric_g\ge (n-1)\kappa g$, with $\kappa>0$. Let $d$ be the diameter of $(M, g)$. The isoperimetric profile function  $h_1(\beta, g)$, as a function of $\beta$, is a positive viscosity supersolution of the differential equation:
\begin{equation}\label{eq:vis-3}
\psi \left(1+\frac{1}{\kappa}\left(\frac{\psi'}{n-1}\right)^{2}\right)^{\frac{n-1}{2}}= \frac{1}{\lambda^\kappa_{n, d}}.
\end{equation}
\end{theorem}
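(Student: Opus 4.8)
The plan is to verify the viscosity supersolution inequality directly. Fix $\beta_0\in(0,1)$, a neighborhood $U$ of it, and a smooth function $\psi$ with $0<\psi\le h_1(\cdot,g)$ on $U$ and $\psi(\beta_0)=h_1(\beta_0,g)$; I must show that at $\beta_0$
$$\psi\left(1+\frac1\kappa\left(\frac{\psi'}{n-1}\right)^2\right)^{\frac{n-1}2}\ge \frac1{\lambda^\kappa_{n,d}}.$$
As in Theorem \ref{thm-main1} I would take $\Omega$ to be an isoperimetric minimizer with $|\Omega|=\beta_0|M|$ and $N=\partial\Omega$ its boundary, smooth off a set of Hausdorff codimension $7$ by \cite{Simon}. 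Its averaged mean curvature $H$ is constant, and the first variation identity \eqref{eq:1st} gives $(n-1)H=\psi'(\beta_0)$; moreover $|N|=\psi(\beta_0)|M|$ since $N$ realizes the infimum. The whole point is to turn the constant $H$ into a bound on $|M|/|N|$ via the Heintze-Karcher estimate, rather than through the second variation used in Section 2.

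Next I would apply the Heintze-Karcher inequality (Theorem 3.8, Chapter IV of \cite{Sakai}) to both $\Omega$ and its complement $M\setminus\Omega$, which $N$ bounds with opposite unit normals. Writing Fermi coordinates along the normal geodesics issuing from the smooth part of $N$ and using $\Ric_g\ge(n-1)\kappa$, the Jacobian is dominated by the model and one obtains
$$|M|=|\Omega|+|M\setminus\Omega|\le \int_N\left[\int_0^{\ell_+(x)}(c_\kappa-Hs_\kappa)_+^{n-1}\,dt+\int_0^{\ell_-(x)}(c_\kappa+Hs_\kappa)_+^{n-1}\,dt\right]dA,$$
where $\ell_\pm(x)$ are the cut distances of $N$ into the two sides; the singular set, having measure zero, is discarded by the same approximation as in Section 2. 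Note the combination is symmetric under $H\mapsto -H$, so sign conventions for $\nu$ are immaterial.

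The computational heart, for $\kappa>0$, is the amplitude-phase identity $c_\kappa(t)\mp Hs_\kappa(t)=\sqrt{1+H^2/\kappa}\,\cos(\sqrt\kappa\,t\pm\phi)$ with $\tan\phi=H/\sqrt\kappa$. Factoring out $(1+H^2/\kappa)^{(n-1)/2}$ and substituting $s=\sqrt\kappa\,t+\phi$ in the first integral and $s=\phi-\sqrt\kappa\,t$ in the second, the bracket becomes $\frac{1}{\sqrt\kappa}(1+H^2/\kappa)^{(n-1)/2}\int_{\phi-\sqrt\kappa\ell_-}^{\phi+\sqrt\kappa\ell_+}(\cos s)_+^{n-1}\,ds$, an integral over an interval of length $\sqrt\kappa(\ell_++\ell_-)$. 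The key geometric observation is that the two normal segments through $x$ concatenate into a single geodesic that is minimizing between its endpoints $p_\pm$: any strict shortcut would cross $N$ at some $q$, whence $\ell_++\ell_-=d(p_+,N)+d(p_-,N)\le d(p_+,q)+d(p_-,q)\le\operatorname{length}$ of the shortcut, a contradiction. Hence $\ell_+(x)+\ell_-(x)=d(p_+,p_-)\le d$. Since $d\le\pi/\sqrt\kappa$ by Bonnet-Myers, $\cos^{n-1}$ is positive and unimodal on the relevant range, so among intervals of length at most $\sqrt\kappa d$ the integral of $(\cos s)^{n-1}$ is largest when centered at $0$; therefore the bracket is at most $(1+H^2/\kappa)^{(n-1)/2}\lambda^\kappa_{n,d}$. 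Substituting back gives $|M|\le(1+H^2/\kappa)^{(n-1)/2}\lambda^\kappa_{n,d}\,|N|$, and dividing by $|M|$, using $|N|=\psi(\beta_0)|M|$ and $H=\psi'(\beta_0)/(n-1)$, yields exactly the claimed inequality.

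I expect the main obstacle to be the two intertwined optimizations where the diameter and the constant $\lambda^\kappa_{n,d}$ genuinely enter: establishing $\ell_++\ell_-\le d$ cleanly through the minimizing-concatenation argument, and justifying that centering maximizes the truncated cosine integral. By contrast the amplitude-phase bookkeeping, the Fermi-coordinate Jacobian comparison, and the approximation across the singular set are routine.
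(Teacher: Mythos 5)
Your proposal is correct and follows essentially the same route as the paper's proof (which is the B\'erard--Besson--Gallot argument \cite{BBG}): take the isoperimetric minimizer, use the first variation to get $(n-1)H=\psi'(\beta_0)$, apply Heintze--Karcher to both sides of $N$, rewrite $c_\kappa \mp H s_\kappa$ in amplitude--phase form, and bound the resulting truncated cosine integral over an interval of length at most $\sqrt{\kappa}\,d$ by the centered one, i.e.\ by $\sqrt{\kappa}\,\lambda^\kappa_{n,d}$. The only difference is organizational: you keep the pointwise cut distances $\ell_\pm(x)$ and prove $\ell_+(x)+\ell_-(x)\le d$ by the concatenation/crossing argument, whereas the paper integrates out to the global inradius $r_0$ of $\Omega$ on one side and to $d-r_0$ on the other, establishing $r_1\le d-r_0$ by the very same crossing argument, so the two reductions are interchangeable.
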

\begin{proof}  The derivation follows essentially the argument in  \cite{BBG}. By scaling invariance of the result, we may assume that $\kappa=1$. By the definition we  need to verify that for any $\beta_0$, and a small neighborhood $U$ of it, a smooth function  $0<\psi(\beta)\le h_1(\beta, g)$ in $U$ with $\psi(\beta_0)= h_1(\beta_0, g)$, the inequality
  \begin{equation}\label{eq:vis-4}
 \psi \left(1+\left(\frac{\psi'}{n-1}\right)^{2}\right)^{\frac{n-1}{2}}\ge \frac{1}{\lambda^1_{n, d}}
  \end{equation}
  holds at $\beta=\beta_0$. Let $\Omega$ be the domain  minimizing $|\partial \Omega|$ with $|\Omega|=\beta_0 |M|$. Let $\partial \Omega$ denote the boundary of $\Omega$. Let $\eta, D,  N_t, \Omega_t$ be as those quantities in the proof of Theorem \ref{thm-main1}. Similar as before we have that
  \begin{equation}\label{eq:first-var}
  (n-1)H=\psi'(\beta_0)
  \end{equation}
where $H$ is the mean curvature of the regular part of $\partial \Omega$. As in \cite{BBG}, let $$r_0=\max\{\operatorname{dist}(x, \partial \Omega)\,|\, x\in \Omega\}.$$ It is easy to see  that
$$
r_1\doteqdot \max\{\operatorname{dist}(x, \partial \Omega)\, |\, x\in M\setminus \Omega\}\le d-r_0.
$$
In fact, for any $x_1\in \Omega$ and $x_2\in M\setminus \Omega$, let $\gamma(s)$ be the minimum geodesic joining from $x_1=\gamma(0)$ to $x_2=\gamma(l)$.
Hence $l=L(\gamma)\le d$. On the other hand, assume that $s_1>0$ is the  first time $\gamma(s)\in \partial \Omega$ and $s_2$ is the last time $\gamma(s)\in \partial \Omega$. Then
\begin{eqnarray*}
d\ge l &=& L(\gamma)
\ge s_1 +l-s_2
\ge r_0+r_1.
\end{eqnarray*}
Now we use Heintze-Karcher's estimate (cf. Theorem 3.8 of Chapter IV in \cite{Sakai}) to conclude that
\begin{eqnarray*}
|\Omega| &\le& |N| \int_0^{r_0} \left(\cos t-H\sin t\right)_{+}^{n-1}\, dt;\\
|M\setminus \Omega| &\le& |N| \int_0^{d-r_0} \left(\cos t+H\sin t\right)_{+}^{n-1}\, dt.
\end{eqnarray*}
Putting them together we have that
\begin{equation}\label{eq-104}
1\le \psi(\beta_0)\int_{r_0-d}^{r_0} \left(\cos t-H\sin t\right)_{+}^{n-1}\, dt.
\end{equation}
Writing $\cos \theta_0= \frac{1}{\sqrt{1+H^2}}, \sin \theta_0 =\frac{H}{\sqrt{1+H^2}}$,  we have  that
\begin{eqnarray*}
1&\le& \psi(\beta_0) (1+H^2)^{\frac{n-1}{2}} \int_{r_0-d}^{r_0} \left[\cos (t+\theta_0)\right]_+^{n-1}\, dt\\
&\le& \psi(\beta_0)\left(1+\left(\frac{\psi'}{n-1}\right)^2\right)^{\frac{n-1}{2}}\int_{-\frac{d}{2}}^{\frac{d}{2}}\cos^{n-1} t\, dt.
\end{eqnarray*}
This implies the claimed result. \end{proof}

A direct consequence is the B\'erard-Besson-Gallot's estimate. By scaling, without the loss of generality we may assume $k=1$. It is well-known that the diameter of the manifold $d$ is bounded from the above by $\pi$. Define
$$ \gamma_n\doteqdot\int_{-\frac{\pi}{2}}^{\frac{\pi}{2}} \cos ^{n-1} t\, dt, \quad \alpha(n, d)\doteqdot \left(\frac{\gamma_n}{\lambda^1_{n, d}}\right)^{\frac{1}{n}}.
$$
\begin{theorem}[B\'erard-Besson-Gallot]\label{thm-bbg} Let $(M^n, g)$ be a compact Riemannian manifold with $\operatorname{Ric}\ge (n-1)g$, $\gamma_n, \lambda^1_{n, d}, \alpha$ be as the above. Then
\begin{equation}\label{est-bbg}
h_1(\beta, g)\ge \alpha \cdot h_1(\beta, g_1)
\end{equation}
\end{theorem}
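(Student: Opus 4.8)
The plan is to derive \eqref{est-bbg} from the first-order viscosity supersolution property of Theorem \ref{thm-main3}, via a maximum-principle comparison against the scaled round-sphere profile, in exact parallel with the derivation of L\'evy--Gromov from Theorem \ref{thm-main1}. By scaling I assume $\kappa=1$. Write $F(\psi,p)=\psi\left(1+\left(\frac{p}{n-1}\right)^2\right)^{\frac{n-1}{2}}$, so that Theorem \ref{thm-main3} says $u:=h_1(\cdot,g)$ is a viscosity supersolution of $F(\psi,\psi')=\frac{1}{\lambda^1_{n,d}}$. The sphere profile $\phi:=h_1(\cdot,g_1)$ is smooth, and a direct computation with geodesic balls (where $\beta=\frac{1}{\gamma_n}\int_0^r\sin^{n-1}$, $\phi=\frac{1}{\gamma_n}\sin^{n-1}r$, so $\frac{\phi'}{n-1}=\cot r$) shows $\phi$ is an \emph{exact} solution of $F(\phi,\phi')=\frac{1}{\gamma_n}$, the round sphere having diameter $\pi$ with $\lambda^1_{n,\pi}=\gamma_n$. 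Since $d\le\pi$ by Bonnet--Myers, we have $\lambda^1_{n,d}\le\gamma_n$ and hence $\alpha=\left(\frac{\gamma_n}{\lambda^1_{n,d}}\right)^{1/n}\ge1$. I set $v:=\alpha\phi$ and aim to prove $u\ge v$ on $(0,1)$, which is exactly \eqref{est-bbg}.

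First I would settle the behaviour near the endpoints, the one place external input is needed. By the asymptotics \eqref{eq-asy}, $\frac{u}{v}\to\left(\frac{|\mathbb{S}^n|}{|M|}\right)^{1/n}\frac{1}{\alpha}$ as $\beta\to0^+$, and likewise as $\beta\to1^-$ by the symmetry $h_1(\beta)=h_1(1-\beta)$. The Bishop--Gromov volume comparison gives $|M|\le|\mathbb{S}^{n-1}|\int_0^d\sin^{n-1}\le|\mathbb{S}^{n-1}|\lambda^1_{n,d}$, the last step because $\int_0^d\sin^{n-1}$ equals the integral of $\cos^{n-1}$ over an interval of length $d$ \emph{not} centered at the peak, whereas $\lambda^1_{n,d}$ is the integral over the centered, hence maximal, interval. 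Since $|\mathbb{S}^n|=|\mathbb{S}^{n-1}|\gamma_n$, this yields $\frac{|\mathbb{S}^n|}{|M|}\ge\frac{\gamma_n}{\lambda^1_{n,d}}=\alpha^n$, so the limiting ratio is $\ge1$ and $u\ge v$ in a neighbourhood of each endpoint.

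I would then argue by contradiction. If $u<v$ somewhere, then $w:=u-v$, continuous on $(0,1)$ and nonnegative near both endpoints, attains a negative minimum $m=w(\beta_0)<0$ at some interior $\beta_0$. The function $\psi:=v+m$ is then a smooth support function for $u$ from below at $\beta_0$: indeed $\psi=v+m\le v+w=u$ with equality at $\beta_0$, and $\psi'(\beta_0)=v'(\beta_0)=\alpha\phi'(\beta_0)$, while $\psi(\beta_0)=u(\beta_0)>0$. Applying the supersolution property of Theorem \ref{thm-main3} to $\psi$, and writing $p:=\frac{\phi'(\beta_0)}{n-1}$, gives at $\beta_0$
\begin{equation*}
u(\beta_0)\left(1+\alpha^2p^2\right)^{\frac{n-1}{2}}=F(\psi,\psi')(\beta_0)\ge\frac{1}{\lambda^1_{n,d}}=\frac{\alpha^n}{\gamma_n}.
\end{equation*}
On the other hand $u(\beta_0)=\psi(\beta_0)<v(\beta_0)=\alpha\phi(\beta_0)$, and using $\phi(\beta_0)\left(1+p^2\right)^{\frac{n-1}{2}}=\frac{1}{\gamma_n}$ together with the elementary inequality $\frac{1+\alpha^2p^2}{1+p^2}\le\alpha^2$ (valid precisely because $\alpha\ge1$), I obtain
\begin{equation*}
u(\beta_0)\left(1+\alpha^2p^2\right)^{\frac{n-1}{2}}<\alpha\phi(\beta_0)\left(1+\alpha^2p^2\right)^{\frac{n-1}{2}}=\frac{\alpha}{\gamma_n}\left(\frac{1+\alpha^2p^2}{1+p^2}\right)^{\frac{n-1}{2}}\le\frac{\alpha^n}{\gamma_n},
\end{equation*}
contradicting the previous display. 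Hence $u\ge v$, which is \eqref{est-bbg}.

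The routine steps are the explicit verification that $\phi$ solves $F(\phi,\phi')=\frac{1}{\gamma_n}$ and the scalar inequality $\frac{1+\alpha^2p^2}{1+p^2}\le\alpha^2$; both fall out immediately. The genuinely external ingredient -- and the main obstacle if one insists on a fully self-contained argument -- is the endpoint analysis, which forces the Bishop--Gromov bound $|M|\le|\mathbb{S}^{n-1}|\lambda^1_{n,d}$ to guarantee that any negative minimum of $u-v$ is interior; everything else is the same viscosity maximum principle already used for L\'evy--Gromov.
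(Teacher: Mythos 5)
Your proof is correct and follows the paper's basic strategy --- comparison of $h_1(\cdot,g)$ with $\alpha\, h_1(\cdot,g_1)$ through the viscosity supersolution property of Theorem \ref{thm-main3} --- but the two arguments diverge in instructive ways. The paper minimizes the \emph{ratio} $h_1(\beta,g)/(\alpha\varphi(\beta))$ and touches $h_1(\cdot,g)$ from below with the \emph{rescaled} function $\lambda\alpha\varphi$ (so the slope at the touching point is $\lambda\alpha\varphi'$), whereas you minimize the \emph{difference} and touch with the \emph{vertical translate} $\alpha\phi+m$ (slope $\alpha\phi'$); both versions close with the same scalar inequality $1+\alpha^2p^2\le\alpha^2(1+p^2)$, i.e.\ $\alpha\ge 1$. (Your identification of the sphere profile as an exact solution with constant $1/\gamma_n$, via $d=\pi$ and $\lambda^1_{n,\pi}=\gamma_n$, is right; the paper's ``$d=\tfrac{\pi}{2}$'' at this step is a typo.) The more substantive difference is at the endpoints. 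The paper's ratio argument genuinely requires $\lim_{\beta\to0}h_1(\beta,g)/(\alpha\varphi(\beta))\ge1$ --- otherwise the infimum of the ratio, being $<1$, might only be approached at the endpoints and never attained --- and the paper disposes of this with the phrase ``by the asymptotics''; your Bishop--Gromov estimate $|M|\le|\mathbb{S}^{n-1}|\int_0^d\sin^{n-1}\le|\mathbb{S}^{n-1}|\lambda^1_{n,d}$, giving $(|\mathbb{S}^n|/|M|)^{1/n}\ge\alpha$, is exactly the justification the paper omits, so this part of your proposal is a genuine addition. Ironically, your own difference argument does not need it: since $u$ and $\alpha\phi$ both tend to $0$ as $\beta\to0^+$ and $\beta\to1^-$ (by \eqref{eq-asy} and the symmetry), $w=u-\alpha\phi$ extends continuously by $0$ to $[0,1]$, so a negative infimum is automatically attained at an interior point. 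That observation also repairs the one small slip in your write-up: from $\lim u/v\ge1$ alone you cannot conclude $u\ge v$ \emph{in a neighbourhood} of each endpoint (a limit equal to $1$ can be approached from below, and equality in your volume bound is possible); but that claim is dispensable, since $w\to0$ at the endpoints is all your interior-minimum argument requires.
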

  This improves L\'evy-Gromov Theorem \ref{thm-gromov} since $\alpha\ge 1$  with the equality  if and only if $M$  is isometric to the round sphere.

To prove (\ref{est-bbg}) we first observe that $h_1(\beta, g_1)$ is a solution of (\ref{eq:vis-3}) with $d=\frac{\pi}{2}$. For simplicity we denote $h_1(\beta, g_1)$ by $\varphi(\beta)$. Hence $\varphi$ satisfies
\begin{equation}\label{eq-vis-e}
\varphi \left(1+\left(\frac{\varphi'}{n-1}\right)^{2}\right)^{\frac{n-1}{2}}= \frac{1}{\gamma_n}
\end{equation}

Assume that the claimed result fails. By the asymptotics we conclude that $\frac{h_1(\beta, g)}{\alpha \varphi( \beta)}$ attains the minimum $\lambda<1$ at some interior point $\beta_0$. At this point apply Theorem \ref{thm-main3} to the support function $\psi(\beta)>0$ with $\psi(\beta_0)=\lambda \alpha \varphi(\beta_0)$ we conclude that at $\beta_0$
$$
\psi'=\lambda \alpha \varphi'
$$
 and
 \begin{eqnarray*}
 \lambda \alpha \varphi \left(1+\left(\frac{\lambda \alpha \varphi'}{n-1}\right)^2\right)^{\frac{n-1}{2}}&=& \psi \left(1+\left(\frac{\psi'}{n-1}\right)^{2}\right)^{\frac{n-1}{2}}\\
 &\ge & \frac{\gamma_n}{\lambda_n} \varphi \left(1+\left(\frac{\varphi'}{n-1}\right)^{2}\right)^{\frac{n-1}{2}}\\
 &=& (\alpha \varphi) \left(\alpha^2 +\left(\frac{\alpha \varphi'}{n-1}\right)^2\right)^{\frac{n-1}{2}}.
 \end{eqnarray*}
The above estimate yields a contradiction since $\lambda<1$,  and $\alpha\ge 1$, $\varphi(\beta_0)>0$.

When $\kappa= 0$, a similar argument proves the following result.

\begin{corollary} Let $(M^n, g)$ be a compact Riemannian manifold with $\operatorname{Ric}\ge 0$. Let
$$\lambda^0_{n, d}\doteqdot \int_0^d (1+t^2)^{\frac{n-1}{2}}\, dt, \quad \alpha'(n, d)\doteqdot \left(\frac{\gamma_n}{\lambda^0_{n, d}}\right)^{\frac{1}{n}}.$$
Then $h_1(\beta, g)$ is a viscosity positive super-solution of the equation
\begin{equation}\label{eq:vis-k0}
\psi \left(1+\left(\frac{\psi'}{n-1}\right)^{2}\right)^{\frac{n-1}{2}}= \frac{1}{\lambda^0_{n, d}}.
\end{equation}
In particular, it implies that $h_1(\beta, g)\ge \alpha'(n, d)\cdot h_1(\beta, g_1)$.
\end{corollary}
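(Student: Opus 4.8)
The plan is to mirror the proof of Theorem \ref{thm-main3}, replacing the spherical model by the flat one, and then to run the maximum–principle comparison as in Theorem \ref{thm-bbg}. There are two parts: first I would establish the viscosity supersolution property (\ref{eq:vis-k0}), and then deduce the inequality $h_1(\beta,g)\ge \alpha'(n,d)\,h_1(\beta,g_1)$ from it. For the supersolution property, fix $\beta_0$ and a smooth support function $0<\psi\le h_1(\cdot,g)$ with $\psi(\beta_0)=h_1(\beta_0,g)$. Let $\Omega$ be an isoperimetric region with $|\Omega|=\beta_0|M|$, let $N$ be the regular part of $\partial\Omega$, and let $H$ be its (constant) mean curvature; as in Theorem \ref{thm-main3} the first variation gives $(n-1)H=\psi'(\beta_0)$.

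Writing $r_0$ for the inradius of $\Omega$ and noting $r_1\le d-r_0$ exactly as before, I would apply the Heintze--Karcher estimate with $\kappa=0$, i.e. with $c_0\equiv 1$ and $s_0(t)=t$, to both sides of $N$:
\[ |\Omega|\le |N|\int_0^{r_0}(1-Ht)_+^{n-1}\,dt,\qquad |M\setminus\Omega|\le |N|\int_0^{d-r_0}(1+Ht)_+^{n-1}\,dt.\]
Adding these and dividing by $|M|$ (so that $|N|/|M|=\psi(\beta_0)$) yields
\[ 1\le \psi(\beta_0)\left[\int_0^{r_0}(1-Ht)_+^{n-1}\,dt+\int_0^{d-r_0}(1+Ht)_+^{n-1}\,dt\right].\]

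The main obstacle is the final estimate of the bracket, and here the flat case genuinely departs from Theorem \ref{thm-main3}. There one writes $\cos t-H\sin t=\sqrt{1+H^2}\,\cos(t+\theta_0)$ and maximizes the integral of the \emph{peaked} profile $[\cos(\cdot+\theta_0)]_+^{n-1}$ over a window of length $d$ by centering it, obtaining $\int_{-d/2}^{d/2}\cos^{n-1}$. In the flat model the profile $(1\pm Ht)^{n-1}$ is monotone rather than peaked, so centering is the wrong move. Instead I would use the pointwise Cauchy--Schwarz inequality $(1\pm Ht)^2\le (1+H^2)(1+t^2)$ (which also disposes of the truncation, since the right side dominates even when $(1\pm Ht)_+=0$) to extract the factor $(1+H^2)^{(n-1)/2}$ and replace each integrand by $(1+t^2)^{(n-1)/2}$. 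Setting $F(r)=\int_0^r (1+t^2)^{(n-1)/2}\,dt$, which is convex because its integrand is increasing, the two pieces are anchored at the valley bottom $t=0$ of $(1+t^2)^{(n-1)/2}$ --- which is exactly where the minimal hypersurface sits --- so convexity gives $F(r_0)+F(d-r_0)\le F(0)+F(d)=\lambda^0_{n,d}$. Combining, $1\le \psi(\beta_0)(1+H^2)^{(n-1)/2}\lambda^0_{n,d}$, and substituting $H=\psi'(\beta_0)/(n-1)$ produces precisely the viscosity form of (\ref{eq:vis-k0}).

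For the comparison, $\varphi:=h_1(\cdot,g_1)$ solves $\varphi\left(1+(\varphi'/(n-1))^2\right)^{(n-1)/2}=1/\gamma_n$, and $\alpha'=(\gamma_n/\lambda^0_{n,d})^{1/n}$ is chosen precisely so that $1/\lambda^0_{n,d}=(\alpha')^n/\gamma_n$. I would then follow the template of Theorem \ref{thm-bbg}: if $h_1(\beta,g)\ge\alpha'\varphi$ failed, the ratio $h_1(\beta,g)/(\alpha'\varphi)$ would attain an interior minimum $\lambda<1$ (using the asymptotics (\ref{eq-asy}) at the endpoints $\beta\to 0,1$), and testing (\ref{eq:vis-k0}) against the admissible support function $\psi=\lambda\alpha'\varphi$ would force, after dividing by $\alpha'\varphi$ and raising to the power $2/(n-1)$, the inequality $\lambda^{2/(n-1)}\left(1+\lambda^2(\alpha')^2 b\right)\ge (\alpha')^2(1+b)$ with $b=(\varphi'(\beta_0)/(n-1))^2$, which is impossible when $(\alpha')^2\ge 1$. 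The delicate point specific to $\kappa=0$, which I would have to address with care, is that there is \emph{no} Myers diameter bound, so unlike in Theorem \ref{thm-bbg} one cannot take $\alpha'\ge 1$ for granted once $d$ is large; controlling the endpoint values of the ratio (via a volume comparison) and the admissible range of $\lambda$ is where this comparison is most likely to require an additional argument.
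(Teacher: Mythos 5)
Your first half---the proof that $h_1(\beta,g)$ is a viscosity supersolution of (\ref{eq:vis-k0})---is correct and is essentially the adaptation of Theorem \ref{thm-main3} that the paper intends by ``a similar argument'': the Heintze--Karcher bounds with $c_0\equiv 1$, $s_0(t)=t$, the inradius estimate $r_1\le d-r_0$, and then your replacement of the phase-shift/centering step by the Cauchy--Schwarz bound $(1\pm Ht)_+^{n-1}\le (1+H^2)^{\frac{n-1}{2}}(1+t^2)^{\frac{n-1}{2}}$ together with superadditivity of $F(r)=\int_0^r(1+t^2)^{\frac{n-1}{2}}\,dt$. (Equivalently: after reflecting the first integral one needs $\int_{-r_0}^{d-r_0}(1+t^2)^{\frac{n-1}{2}}\,dt\le \int_0^d(1+t^2)^{\frac{n-1}{2}}\,dt$, which holds because the even integrand increases in $|t|$ and the interval has length $d$ and contains $0$.) This yields $1\le \psi(\beta_0)\left(1+H^2\right)^{\frac{n-1}{2}}\lambda^0_{n,d}$ and hence (\ref{eq:vis-k0}).

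The second half has a genuine gap, and the worry you flag at the end is exactly where it sits. At the interior minimum your inequality $\lambda^{2/(n-1)}\left(1+\lambda^2(\alpha')^2 b\right)\ge (\alpha')^2(1+b)$ is contradictory for $\lambda<1$ only if $(\alpha')^2\ge 1$; when $\alpha'<1$ there is no contradiction, since at $b=0$ (the point $\beta_0=\tfrac12$, where $\varphi'=0$) every $\lambda\in[(\alpha')^{n-1},1)$ satisfies it. And $\alpha'<1$ genuinely occurs under $\operatorname{Ric}\ge 0$: with no Myers bound, $d$ can be arbitrarily large (flat tori), so $\lambda^0_{n,d}\ge d^n/n\to\infty$ while $\gamma_n$ is fixed. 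Your other concern, the endpoints, actually resolves itself: by (\ref{eq-asy}) the ratio tends to $\frac{1}{\alpha'}\left(\frac{|\mathbb{S}^n|}{|M|}\right)^{1/n}$ as $\beta\to 0,1$, and this is $\ge 1$ because $|\mathbb{S}^n|=|\mathbb{S}^{n-1}|\gamma_n$ while Bishop's comparison gives $|M|\le \sigma_n d^n\le |\mathbb{S}^{n-1}|\lambda^0_{n,d}$. So what your argument proves is the corollary in full when $\lambda^0_{n,d}\le\gamma_n$ (i.e.\ $\alpha'\ge 1$), and otherwise only the weaker bound $h_1(\beta,g)\ge (\alpha')^n h_1(\beta,g_1)$, since any interior minimum must satisfy $\lambda\ge(\alpha')^{n-1}$. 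You should be aware that the paper offers no remedy either: its entire proof of this corollary is the phrase ``a similar argument proves,'' and the template it points to (Theorem \ref{thm-bbg}) invokes $\alpha\ge 1$ at the contradiction step, which is available there only because Myers' theorem gives $d\le\pi$ when $\kappa=1$. So the case $\alpha'<1$ of the stated corollary requires an idea beyond what either you or the paper supplies.
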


\section*{Acknowledgments.} { The first author's research is partially supported by a NSF grant DMS-1401500. We would like to thank Frank Morgan for pointing out the related earlier works of his and Bayle's. }

\bibliographystyle{amsalpha}

\end{document}